\documentclass [twoside,reqno, 12pt] {amsart}



\usepackage{amsfonts}
\usepackage{amssymb}
\usepackage{a4}
\usepackage{color}

\newtheorem{thm}{Theorem}[section]
\newtheorem{cor}[thm]{Corollary}
\newtheorem{lem}[thm]{Lemma}
\newtheorem{prop}[thm]{Proposition}

\newtheorem{rem}[thm]{Remark}

\theoremstyle{definition}

\numberwithin{equation}{section}


\newcommand{\C}{\mathbb{C}}

\newcommand{\N}{\mathbb{N}}

\newcommand{\R}{\mathbb{R}}

\newcommand{\supp}{\operatorname{supp}}

\newcommand{\Z}{\mathbb{Z}}

\parindent0pt
\parskip6pt

\def\hat{\widehat}
\def\tilde{\widetilde}
\def \bfo {\begin {eqnarray*} }
\def \efo {\end {eqnarray*} }
\def \ba {\begin {eqnarray*} }
\def \ea {\end {eqnarray*} }
\def \beq {\begin {eqnarray}}
\def \eeq {\end {eqnarray}}
\def \supp {\hbox{supp }}

\def \p {\partial}

\def\hat{\widehat}
\def\tilde{\widetilde}
\def \bfo {\begin {eqnarray*} }
\def \efo {\end {eqnarray*} }
\def \ba {\begin {eqnarray*} }
\def \ea {\end {eqnarray*} }
\def \beq {\begin {eqnarray}}
\def \eeq {\end {eqnarray}}
\def \supp {\hbox{supp }}

\def \p {\partial}


\begin{document}

 \title[stability at high frequencies]{Stability estimates in a partial data inverse boundary value problem for biharmonic operators at high frequencies}

\author[Liu]{Boya Liu}
\address{B. Liu, Department of Mathematics\\
University of California, Irvine\\ 
CA 92697-3875, USA }
\email{boyal2@uci.edu}

\maketitle

\begin{abstract}
We study the inverse boundary value problems of determining a potential in the Helmholtz type equation for the perturbed biharmonic operator from the knowledge of the partial Cauchy data set. Our geometric setting is that of a domain whose inaccessible portion of the boundary is contained in a hyperplane, and we are given the Cauchy data set on the complement. The uniqueness and logarithmic stability for this problem were established in \cite{Yang-Yang} and \cite{Ch_Heck_2017}, respectively. We establish stability estimates in the high frequency regime,  with an explicit dependence on the frequency parameter, under mild regularity assumptions on the potentials, sharpening those of \cite{Ch_Heck_2017}. 
\end{abstract}

\section{Introduction and statement of results}

Let $\Omega\subset \{x = (x_1, x_2, \dots, x_n) \in \R^n: x_n<0\}$, $n\ge 3$, be a bounded open set with $C^\infty$ boundary. Assume that $\Gamma_0:=\p \Omega\cap \{x_n=0\}$ is non-empty, and let us set $\Gamma=\p \Omega\setminus \Gamma_0$. Let $k\ge 0$, and let $q\in L^\infty(\Omega)$.  Consider the Helmholtz type equation for the perturbed biharmonic operator, 
\begin{equation}
\label{eq_int_1}
(\Delta^2-k^4 +q)u =0\quad \text{in}\quad \Omega. 
\end{equation}
Such fourth order operators arise in the context of modeling of hinged elastic beams and suspension bridges, see \cite{Gaz_Grun_Sweers_book}. Associated to the equation \eqref{eq_int_1} and  the open portion $\Gamma$ of $\p \Omega$, we introduce the partial Cauchy data set
\begin{align*}
&C_q^\Gamma(k)=\{ (u|_{\Gamma}, (\Delta u)|_{\Gamma}, \p_\nu u|_{\Gamma}, \p_\nu(\Delta u)|_{\Gamma}) : u\in H^4(\Omega)\text{ satisfies }\eqref{eq_int_1}, \\
&u|_{\Gamma_0}=(\Delta u)|_{\Gamma_0}=0\}\subset H^{\frac{7}{2}}(\Gamma)\times H^{ \frac{3}{2}}(\Gamma)\times H^{\frac{5}{2}}(\Gamma)\times H^{\frac{1}{2}}(\Gamma)=:H^{\frac{7}{2}, \frac{3}{2},\frac{5}{2},\frac{1}{2}}(\Gamma).
\end{align*} 
Here  and in what follows, $\nu$ is the unit outer normal to $\p \Omega$, and  $H^s(\Omega)=\{U|_{\Omega}:U\in H^s(\R^n)\}$, $s\in \R$, is the standard Sobolev space on $\Omega$, see \cite[Chapter 5]{Agranovich_book}. 

In this paper we are concerned with the partial data inverse problem of recovering the potential $q$ in $\Omega$ from the knowledge of the partial Cauchy data set $C_q^\Gamma(k)$ at a fixed frequency $k\ge 0$. The global uniqueness for this problem was established in \cite{Yang-Yang}.  Stability estimates of logarithmic type complementing the uniqueness result were obtained in \cite{Ch_Heck_2017}, when $k=0$, under the assumption that  $q\in H^s(\Omega)$, 
for some $s>\frac{n}{2}$.  Thanks to the work \cite{Mandache_2001}, logarithmic stability estimates are expected to be optimal for such inverse boundary value problems  when the operator $\Delta^2$ is replaced by $\Delta$ and $k=0$, even when the full Cauchy data set $C_q^{\p \Omega}(k)$ is given. We refer to  the works  \cite{Assylbekov_Iyer}, \cite{Assylbekov_Yang}, \cite{Bhatt_Ghosh}, \cite{Ch_Venki_2015},   \cite{Ghosh_2016}, \cite{Ikehata_1991}, \cite{Isakov_1991}, \cite{KLU_2012},   \cite{KLU_2014}, \cite{Krup_Uhlmann_2016_spectral}, \cite{Serov}, among others, for the study of inverse boundary value problems for perturbed biharmonic operators. 

The logarithmic stability estimates established in \cite{Ch_Heck_2017} indicate that the inverse boundary value problem in question is severely ill--posed, which makes it most challenging to design reconstruction algorithms with high resolution in practice, since small errors in measurements may result in exponentially large errors in the reconstruction of the unknown potential. Nevertheless, it has been observed numerically that the stability may increase when the frequency $k$ of the problem becomes large, see \cite{Colton_Haddar}. Stability estimates at high frequencies as well as the  phenomenon of increasing stability  in the high frequency regime  for several fundamental inverse boundary value problems, have been studied rigorously in \cite{Hry_Isakov}, \cite{Isakov_2007_inres} \cite{Isakov_2011}, \cite{Isakov_Lai_Wang_2016}, \cite{INUW_2014}, \cite{Isakov_Wang_2014}, among others, in the full data case. In the case of partial data inverse boundary value  problems, the question of deriving 
stability estimates at large frequencies and understanding the increasing stability phenomenon has only been studied in the works \cite{Ch_Heck_2018},  \cite{Krupchyk_Uhlmann_2019}, and \cite{Liang}, all in the case of Schr\"odinger operators, to the best of our knowledge.

The goal of this paper is to study the issue of deriving stability estimates in the high frequency regime for the partial data inverse problem formulated above, for the perturbed biharmonic operator. To state our results, let $q_1, q_2\in (L^\infty\cap H^s)(\Omega)$, for some $0<s<1/2$.  We define the distance between the partial Cauchy data sets as follows, 
\begin{align*}
\text{dist} (C_{q_1}^\Gamma(k), C_{q_2}^\Gamma(k)):=\max\bigg\{
\sup_{0\ne f\in C_{q_1}^\Gamma(k)}\inf_{\tilde f\in C_{q_2}^\Gamma(k)} \frac{\|f-\tilde f\|_{H^{\frac{7}{2}, \frac{3}{2},\frac{5}{2},\frac{1}{2}}(\Gamma)}}{\|f\|_{H^{\frac{7}{2}, \frac{3}{2},\frac{5}{2},\frac{1}{2}}(\Gamma)}}, \\
\sup_{0\ne f\in C_{q_2}^\Gamma(k)}\inf_{\tilde f\in C_{q_1}^\Gamma(k)} \frac{\|f-\tilde f\|_{H^{\frac{7}{2}, \frac{3}{2},\frac{5}{2},\frac{1}{2}}(\Gamma)}}{\|f\|_{H^{\frac{7}{2}, \frac{3}{2},\frac{5}{2},\frac{1}{2}}(\Gamma)}}
 \bigg\},
\end{align*}
where $f=(f_1, f_2, f_3, f_4)$ and $\tilde f=(\tilde f_1, \tilde f_2, \tilde f_3, \tilde f_4)$, and the norm in the space $H^{\frac{7}{2}, \frac{3}{2},\frac{5}{2},\frac{1}{2}}(\Gamma)$ is given by
\[
\|f\|_{H^{\frac{7}{2}, \frac{3}{2},\frac{5}{2},\frac{1}{2}}(\Gamma)}= \big(\|f_1\|_{H^{\frac{7}{2}}(\Gamma)}^2+ \|f_2\|_{H^{ \frac{3}{2}}(\Gamma)}^2+\|f_3\|_{H^{\frac{5}{2}}(\Gamma)}^2+\|f_4\|_{H^{\frac{1}{2}}(\Gamma)}^2\big)^{1/2}.
\]

The main result of this paper is as follows. 
\begin{thm}
\label{thm_main}
Let $\Omega\subset \{\R^n: x_n<0\}$, $n\ge 3$, be a bounded open set with $C^\infty$ boundary. Assume that $\Gamma_0=\p \Omega\cap \{x_n=0\}$ is non-empty, and let $\Gamma=\p \Omega\setminus \Gamma_0$. Let $M>0$, $0<s<1/2$,   and let $q_1, q_2\in (L^\infty\cap H^s)(\Omega)$  be such that $\|q_j\|_{L^\infty(\Omega)}+\|q_j\|_{H^s(\Omega)}\le M$, $j=1,2$. Then there is a constant $C>0$ such that for all $k\ge 1$, $0<\delta:= \emph{\text{dist}} (C_{q_1}^\Gamma(k), C_{q_2}^\Gamma(k))<1/e$, we have
\begin{equation}
\label{eq_int_2}
\|q_1-q_2\|_{H^{-1}(\Omega)}\le   e^{Ck}\delta^{\frac{1}{2}}+ \frac{C}{(k+\log \frac{1}{\delta})^{2\alpha}}. 
\end{equation}
Here $0<\alpha=\frac{(n-1)s}{(2s+n-1)(n+2)} <\frac{1}{2}$ and $C>0$ depends on $\Omega$, $M$, $s$,  but is independent of $k$. 
\end{thm}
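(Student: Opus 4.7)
The strategy follows the established template for partial-data inverse boundary value problems with measurements missing on a hyperplane: first reduce to the full-data setting by reflection, then construct complex geometric optics (CGO) solutions adapted to the biharmonic Helmholtz operator, and finally perform a low-/high-frequency split in Fourier space to extract the $k$-dependence.

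Begin by extending across $\{x_n=0\}$: let $\Omega^*$ be the mirror image of $\Omega$, set $\tilde\Omega:=\Omega\cup\Gamma_0\cup\Omega^*$, and extend $q_j$ to $\tilde q_j\in(L^\infty\cap H^s)(\tilde\Omega)$ by even reflection, which is bounded on $H^s$ since $0<s<1/2$. I will work with solutions $u_j$ of $(\Delta^2-k^4+\tilde q_j)u_j=0$ in $\tilde\Omega$ that are \emph{odd} in $x_n$; such solutions automatically satisfy $u_j|_{\Gamma_0}=(\Delta u_j)|_{\Gamma_0}=0$, so the boundary conditions defining $C_{q_j}^\Gamma(k)$ are built in.

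Construct CGO solutions of the form $u_j=e^{x\cdot\zeta_j}(1+r_j)$ with $\zeta_j\cdot\zeta_j=-k^2$. Using the factorization $\Delta^2-k^4=(\Delta-k^2)(\Delta+k^2)$ and conjugating by $e^{x\cdot\zeta_j}$, the equation for $r_j$ becomes $(T_{\zeta_j}^2-2k^2 T_{\zeta_j}+\tilde q_j)r_j=-\tilde q_j$, with $T_\zeta:=\Delta+2\zeta\cdot\nabla$. Inverting $T_\zeta$ on the standard weighted $L^2$ spaces (its inverse has norm $O(|\zeta|^{-1})$) and iterating yields $\|r_j\|_{L^2(\tilde\Omega)}=O(|\zeta_j|^{-2})$, uniformly in $\zeta_j$ once $|\zeta_j|$ exceeds a fixed multiple of $k$. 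Antisymmetrize in $x_n$ by subtracting the reflected CGO. For a given $\xi\in\R^n$, pick $\zeta_1,\zeta_2$ with $\zeta_j\cdot\zeta_j=-k^2$, $\zeta_1+\zeta_2=-i\xi$, and $|\Re\zeta_j|=\tau$, which is feasible precisely when $|\xi|\le 2\sqrt{\tau^2+k^2}$. The biharmonic Green identity then yields
\begin{equation*}
\widehat{(q_1-q_2)}(\xi) \;=\; -\!\int_{\tilde\Omega}(\tilde q_1-\tilde q_2)\,e^{-ix\cdot\xi}(r_1+r_2+r_1 r_2)\,dx \;+\; B_\Gamma(\xi),
\end{equation*}
where $B_\Gamma$ is a boundary term supported on $\Gamma$. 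Introducing an auxiliary solution of the $q_2$-equation whose Cauchy data nearly matches that of $u_1$ on $\Gamma$, and combining the $\delta$-small trace difference with an $H^4$-type trace bound via Sobolev interpolation, refines the naive estimate $|B_\Gamma|\le C\delta e^{C\tau}$ to $|B_\Gamma|\le Ce^{C\tau}\delta^{1/2}$, giving
\begin{equation*}
|\widehat{(q_1-q_2)}(\xi)|\;\le\; Ce^{C\tau}\delta^{1/2}+\frac{C}{\tau^2}, \qquad |\xi|\le 2\sqrt{\tau^2+k^2}.
\end{equation*}

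Finally, split the $H^{-1}$ norm at radius $R=2\sqrt{\tau^2+k^2}\sim \tau+k$: the tail $\int_{|\xi|>R}(1+|\xi|^2)^{-1}|\widehat{(q_1-q_2)}|^2\,d\xi$ is controlled by $CR^{-2(1+s)}M^2$ via the $H^s$ a priori bound, and the low-frequency integral is handled by inserting the pointwise estimate above against the weight $(1+|\xi|^2)^{-1}$ on the ball $|\xi|\le R$. The choice $\tau\sim\log(1/\delta)$ absorbs the boundary contribution into the first term $e^{Ck}\delta^{1/2}$ of \eqref{eq_int_2}, with the factor $e^{Ck}$ appearing because $|\zeta_j|\sim\sqrt{\tau^2+k^2}\sim\tau+k$; the interior CGO piece $\tau^{-2}$ together with the tail $R^{-2(1+s)}$, balanced by interpolation against the ball of volume $R^n$ and the negative-order weight, combine to produce the second term $C(k+\log(1/\delta))^{-2\alpha}$, and optimizing over the three scales $\tau$, $k$, $R$ is what pins down the exponent $\alpha=(n-1)s/[(2s+n-1)(n+2)]$. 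The main obstacle will be executing this optimization uniformly in $k$: the CGO remainder bound $\|r_j\|_{L^2}=O(|\zeta_j|^{-2})$ has to remain valid in the regime $|\zeta_j|\sim k$ that governs the first term of \eqref{eq_int_2}, and the $\delta^{1/2}$ boundary upgrade has to coexist with the exponential $e^{C\tau}$ so that the balance between $\tau^{-2}R^{(n-2)/2}$ and $R^{-(1+s)}$ recovers the precise $\alpha$ rather than a cruder polynomial rate.
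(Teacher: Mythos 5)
Your outline (reflection across $\{x_n=0\}$, CGO solutions with $k$-uniform remainder bounds, splitting of the $H^{-1}$ norm in Fourier space) matches the paper's skeleton, but there is a genuine gap at the central step: the cross terms produced by the antisymmetrization are missing. Writing $u_j(x)=\tilde u_j(x)-\tilde u_j(x',-x_n)$ with $\tilde u_j=e^{i\zeta_j\cdot x}(1+r_j)$, the product $u_1u_2$ contributes not only $e^{-i\xi\cdot x}$ and its reflection (which assemble into $\mathcal{F}(q_1^{\mathrm{even}}-q_2^{\mathrm{even}})(\xi)$) but also the two mixed products $e^{-i\xi_\pm\cdot x}$ with $\xi_\pm=\big(\xi',\pm 2\sqrt{k^2+a^2-|\xi|^2/4}\,|\xi'|/|\xi|\big)$. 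These frequencies have magnitude comparable to $\sqrt{k^2+a^2}$, they do not cancel, and they are the main obstruction in this partial-data geometry. Your displayed identity for $\widehat{(q_1-q_2)}(\xi)$ omits them entirely, so the low/high frequency splitting cannot be carried out as you describe. The paper controls $|\mathcal{F}(q_1-q_2)(\xi_\pm)|$ by a quantitative Riemann--Lebesgue lemma for compactly supported $H^s$ functions, $|\hat f(\xi)|\le C_N(1+\tau|\xi|)^{-N}+C\tau^s$; integrating this over the low-frequency box (where $|\xi'|/|\xi|$ can be small, so the decay is not uniform in direction) and optimizing $\tau$ produces the factor $(k^2+a^2)^{-\frac{(n-1)s}{2s+n-1}}$, and it is the balance of this term against the tail contribution $\rho^{-2}$ that pins down $\alpha=\frac{(n-1)s}{(2s+n-1)(n+2)}$. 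In your scheme the $H^s$ hypothesis is spent only on the tail $R^{-2(1+s)}$ of the $H^{-1}$ norm, which, even if executed, would yield a different exponent; the stated $\alpha$ cannot come out of the optimization you sketch.

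Two smaller points. First, the $\delta^{1/2}$ does not arise from any refinement of the boundary term: the Green-identity argument with the Cauchy data sets gives a bound \emph{linear} in $\delta$ (namely $Ce^{2aR}k^{8}\delta$ for the bilinear form $\int(q_2-q_1)u_1u_2\,dx$), and the square root appears only at the end because one estimates $\|q_1^{\mathrm{even}}-q_2^{\mathrm{even}}\|_{H^{-1}}^2$ and then chooses $a=C_0\sqrt{M}\,k+\frac{1}{5R}\log\frac{1}{\delta}$, turning $e^{5aR}\delta^2$ into $e^{Ck}\delta$. Second, the tail of the $H^{-1}$ norm is handled in the paper with only the $L^2$ a priori bound, giving $C\rho^{-2}$; no $H^s$ input is needed there.
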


\begin{rem} To the best of our knowledge, Theorem \ref{thm_main}  seems to be the first stability estimate at high frequencies in the context of inverse boundary value problems for higher order elliptic PDE. 
\end{rem}

\begin{rem} Theorem \ref{thm_main} implies also a stability result for a fixed frequency $k$, which is sharper than the 
stability result of \cite{Ch_Heck_2017}, in terms of the regularity of the potentials. Indeed, in \cite{Ch_Heck_2017} one assumes that the potentials are of class $H^s(\Omega)$, $s>\frac{n}{2}$, whereas in Theorem \ref{thm_main}, the $(L^\infty \cap H^s)(\Omega)$--regularity, $0<s<1/2$, suffices. In particular, no continuity of the potentials in Theorem \ref{thm_main} is assumed, and the required Sobolev regularity assumptions are fairly mild and are independent of the dimension. 
\end{rem}

\begin{rem} 
The partial data inverse problem in the particular setting considered in this paper, i.e. when the inaccessible portion of the boundary of the domain $\Omega$ is a part of the hyperplane and the measurements are performed on its complement, was first studied in \cite{Isakov_local} in the context of the Schr\"odinger equation. The uniqueness result of  \cite{Isakov_local} was complemented by the logarithmic stability estimates of \cite{Heck_Wang} for potentials of class $H^s(\Omega)$, $s>\frac{n}{2}$, see also \cite{Caro_Marinov_2016}. High frequency stability results were obtained in \cite{Ch_Heck_2018}, for potentials of the same class, and increasing stability estimates were obtained in \cite{Liang} for potentials of class $C^1(\Omega)$, assuming also the potentials agree near the flat portion of the boundary.  We would like to emphasize that the method of the proof of Theorem  \ref{thm_main} allows one to improve the regularity assumptions on the potentials to $(L^\infty\cap H^s)(\Omega)$, $0<s<1/2$, in all of the stability and  increasing stability results for the Schr\"odinger equation.  This is accomplished by deriving and exploiting a quantitative version of the Riemann--Lebesgue lemma, under mild regularity assumptions on the potentials, whereas the aforementioned works rely on a quantitative version of the Riemann--Lebesgue lemma valid for  $L^1$ potentials with the $L^1$--modulus of continuity of H\"older type, established in \cite{Heck_Wang}.
\end{rem}

The strategy of the proof of Theorem  \ref{thm_main}, going back to \cite{Isakov_local}, consists of using a reflection argument to construct complex geometric optics solutions to the biharmonic equation, which vanish on the flat portion of the boundary.  A fundamental role in \cite{Isakov_local} is also played by the Riemann-Lebesgue lemma, which should be sharpened to a quantitative statement when deriving stability estimates. Here we establish an accurate version of the Riemann-Lebesgue lemma for compactly supported elements of $H^s(\R^n)$, $0< s < 1/2$. The advantage of working with potentials of such Sobolev regularity, which we exploit, comes from the well known fact that the operator of extension by zero takes $H^s(\Omega)$ to $H^s(\R^n)$, $0< s <1/2$, boundedly,    see \cite[Theorem 5.1.10]{Agranovich_book} and \cite{Faraco_Rogers}.  It is thanks to this observation that we are able to establish Theorem  \ref{thm_main} under fairly mild regularity assumptions.

Assuming that the potentials $q_1$ and $q_2$ enjoy some additional regularity properties and a priori bounds, we obtain the following corollary of Theorem \ref{thm_main}. 
\begin{cor}
\label{cor_main}
Let $\Omega\subset \{\R^n: x_n<0\}$, $n\ge 3$, be a bounded open set with $C^\infty$ boundary. Assume that $\Gamma_0=\p \Omega\cap \{x_n=0\}$ is non-empty, and let $\Gamma=\p \Omega\setminus \Gamma_0$.  Let $M>0$, $s>\frac{n}{2}$,  and let $q_1, q_2\in H^s(\Omega)$, be such that $\|q_j\|_{H^s(\Omega)}\le M$, $j=1,2$. Then there is a constant $C>0$ such that for all $k\ge 1$, $0<\delta:= \emph{\text{dist}} (C_{q_1}^\Gamma(k), C_{q_2}^\Gamma(k))<1/e$, we have
\begin{equation}
\label{eq_int_3}
\|q_1-q_2\|_{L^\infty(\Omega)}\le  \bigg( e^{Ck}\delta^{\frac{1}{2}} + \frac{C}{(k+\log \frac{1}{\delta})^{2\alpha}}
\bigg)^{\frac{s-\frac{n}{2}}{2(s+1)}}.
\end{equation}
Here $0<\alpha=\frac{(n-1)s}{(2s+n-1)(n+2)} <\frac{1}{2}$ and $C>0$ depends on $\Omega$, $M$, $s$,  but is independent of $k$. 
\end{cor}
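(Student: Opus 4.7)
The plan is to deduce Corollary~\ref{cor_main} from Theorem~\ref{thm_main} by combining the $H^{-1}$ stability estimate with a Sobolev interpolation inequality, exploiting the embedding $H^s(\Omega)\hookrightarrow L^\infty(\Omega)$ available for $s>n/2$.

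First, since $s>n/2\ge 3/2$, we have $q_1,q_2\in L^\infty(\Omega)\cap H^{s'}(\Omega)$ for any $0<s'<1/2$, so Theorem~\ref{thm_main} is applicable and yields
\[
\|q_1-q_2\|_{H^{-1}(\Omega)} \le \delta_1, \qquad \delta_1 := e^{Ck}\delta^{\tfrac{1}{2}} + \frac{C}{(k+\log\tfrac{1}{\delta})^{2\alpha}}.
\]

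Next, I would establish an interpolation inequality of the form
\[
\|u\|_{L^\infty(\Omega)} \le C\,\|u\|_{H^{-1}(\Omega)}^{\frac{s-n/2}{s+1}}\|u\|_{H^s(\Omega)}^{\frac{n/2+1}{s+1}}.
\]
The route is to apply a Stein-type total extension operator $E$ that is simultaneously bounded $H^{t}(\Omega)\to H^{t}(\R^n)$ for the relevant range of $t$ (available on a $C^\infty$ domain), and then to control $(Eu)(x)$ via the inverse Fourier transform, splitting the frequency integral into $\{|\xi|\le R\}$ and $\{|\xi|>R\}$. Cauchy--Schwarz on each piece produces $\|Eu\|_{H^{-1}(\R^n)}$ and $\|Eu\|_{H^s(\R^n)}$ respectively (with polynomial factors in $R$), and optimizing in $R$ yields the exponent $\tfrac{s-n/2}{s+1}$ on the $H^{-1}$ norm (with at most an arbitrarily small $\varepsilon$-loss to ensure $t>n/2$).

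To match the exponent $\tfrac{s-n/2}{2(s+1)}$ appearing in \eqref{eq_int_3}, I would then take a geometric mean of this interpolation bound with the direct Sobolev embedding bound $\|q_1-q_2\|_{L^\infty(\Omega)} \le C\|q_1-q_2\|_{H^s(\Omega)}$. Writing
\[
\|q_1-q_2\|_{L^\infty(\Omega)}^{2} \le C\,\|q_1-q_2\|_{H^{-1}(\Omega)}^{\frac{s-n/2}{s+1}}\,\|q_1-q_2\|_{H^s(\Omega)}^{\frac{n/2+1}{s+1}+1},
\]
absorbing every $H^s$ factor into the constant via the a priori bound $\|q_j\|_{H^s(\Omega)}\le M$, taking the square root, and substituting $\|q_1-q_2\|_{H^{-1}(\Omega)}\le \delta_1$ produces \eqref{eq_int_3}.

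I do not anticipate any serious obstacle: the two inputs (the $H^{-1}$ stability estimate from Theorem~\ref{thm_main} and the classical Sobolev interpolation) are essentially decoupled. The only bookkeeping concerns are choosing an extension operator that respects both $H^{-1}(\Omega)$ and $H^s(\Omega)$ norms simultaneously with constants depending only on $\Omega$ and $s$, and tracking the exponents carefully through the geometric mean step.
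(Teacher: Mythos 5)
Your proposal is correct and follows essentially the same route as the paper's proof: apply Theorem \ref{thm_main} (after observing $H^s(\Omega)\subset (L^\infty\cap H^{s'})(\Omega)$ for $s'<1/2$) and then run Alessandrini's interpolation argument, your ``geometric mean with the Sobolev embedding'' being algebraically identical to the paper's single interpolation of $L^\infty \hookleftarrow H^{\frac{n}{2}+\varepsilon}$ between $H^{-1}$ and $H^{s}$ with $\varepsilon=\frac{1}{2}(s-\frac{n}{2})$, which is exactly how the exponent $\frac{s-\frac{n}{2}}{2(s+1)}$ arises there. The one point worth keeping in mind (shared with the paper, which is silent on it) is that the exponent $\alpha$ entering the $H^{-1}$ bound is the one produced by Theorem \ref{thm_main} for the low-regularity index $s'<1/2$, not the value obtained by substituting $s>\frac{n}{2}$.
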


This paper is organized as follows. Section \ref{app_CGO} contains a construction of complex geometric optics solutions to Helmholtz type equations for perturbed biharmonic operators by means of a Fourier series approach, extending the result of  \cite{Hahner_1996} in the Schr\"odinger case. Let us remark that the original approach to the construction of such solutions in \cite{Sylvester_Uhlmann_1987} proceeds globally, by constructing the Faddeev Green function and exploiting techniques of the Fourier transformation in all of $\R^n$. See also \cite{Krup_Uhlmann_2016_spectral} for an extension of such constructions to the case of polyharmonic operators with (unbounded) potentials. In Section \ref{sec_Riemann_Lebesgue} we establish a quantitative version of the Riemann--Lebesgue Lemma for compactly supported functions in $H^s(\R^n)$, $0< s < 1/2$. The proofs of Theorem \ref{thm_main} and Corollary \ref{cor_main} are given in Section \ref{sec_proof}.

\section{Complex geometric optics solutions to Helmholtz type equations for perturbed biharmonic operators}

\label{app_CGO}

We have the following result in the spirit of  
\cite{Hahner_1996}, see also   \cite{FSU_book}, for the Helmholtz type equations for perturbed  biharmonic operators.  This result is useful here since all the constants are independent of the frequency $k$. 
\begin{prop}
\label{prop_cgo_Hahner}
Let $\Omega\subset \R^n$, $n\ge 2$, be a bounded open set, let $q\in L^\infty(\Omega)$, and $k\ge 0$. Then there are constants $C_0>0$ and $C_1>0$, depending on $\Omega$ and $n$ only, such that  for all $\zeta\in \C^n$,  $\zeta\cdot\zeta=k^2$, and $|\emph{\text{Im}} \zeta|\ge \max \{C_0\sqrt{\|q\|_{L^\infty(\Omega)}}, 1\}$, the equation
\begin{equation}
\label{CGO_1}
(\Delta^2-k^4+q)u=0\quad \text{in}\quad \Omega
\end{equation}
has a solution of the form
\begin{equation}
\label{CGO_2}
u(x;\zeta)=e^{i\zeta\cdot x}(1+r(x;\zeta)),
\end{equation}
where $r\in L^2(\Omega)$ satisfies 
\[
\|r(\cdot;\zeta)\|_{L^2(\Omega)}\le \frac{C_1}{|\emph{\text{Im}} \zeta|^2}\|q\|_{L^\infty(\Omega)}.
\]
\end{prop}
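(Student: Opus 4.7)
The plan is to seek the correction $r$ as a fixed point of a contraction mapping, in the spirit of H\"ahner's argument \cite{Hahner_1996}. First, I substitute the Ansatz \eqref{CGO_2} into \eqref{CGO_1}. Using the factorization $\Delta^{2}-k^{4} = (\Delta-k^{2})(\Delta+k^{2})$ together with the constraint $\zeta\cdot\zeta = k^{2}$, a direct computation gives
\[
e^{-i\zeta\cdot x}(\Delta+k^{2})e^{i\zeta\cdot x} = L_{\zeta}, \qquad e^{-i\zeta\cdot x}(\Delta-k^{2})e^{i\zeta\cdot x} = L_{\zeta}-2k^{2},
\]
where $L_{\zeta} := \Delta + 2i\zeta\cdot\nabla$. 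These two operators commute (they differ only by the scalar $2k^{2}$) and their composition annihilates the constant $1$, so the Ansatz reduces \eqref{CGO_1} to
\[
L_{\zeta}(L_{\zeta} - 2k^{2}) r = -q(1+r) \quad \text{in } \Omega.
\]

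Next, I construct right inverses for $L_{\zeta}$ and $L_{\zeta}-2k^{2}$ on a cube $Q=(-R,R)^{n}\supset\Omega$ by Fourier series. After a rotation of coordinates, one may assume $\mathrm{Im}\,\zeta = \tau e_{n}$ with $\tau = |\mathrm{Im}\,\zeta|\ge 1$. H\"ahner's device is to multiply the equation by $e^{-i\pi x_{n}/(2R)}$, shifting the dual lattice from $(\pi/R)\mathbb{Z}^{n}$ off the zero set of $\mathrm{Im}\,p_{\zeta}$, where $p_{\zeta}(\xi) = -|\xi|^{2} - 2\zeta\cdot\xi$ denotes the symbol of $L_{\zeta}$. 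The crucial observation is that $p_{\zeta}$ and $p_{\zeta}-2k^{2}$ differ only by a real constant, so their imaginary parts coincide; consequently the same shift yields simultaneously
\[
|p_{\zeta}(\xi)| \gtrsim \tau, \qquad |p_{\zeta}(\xi)-2k^{2}| \gtrsim \tau,
\]
for all $\xi$ in the shifted lattice, with implied constants depending only on $R$ (hence on $\Omega$). By Plancherel, this produces bounded operators $G_{1}, G_{2} : L^{2}(Q)\to L^{2}(Q)$ inverting $L_{\zeta}$ and $L_{\zeta}-2k^{2}$ respectively, with $\|G_{j}\|_{L^{2}\to L^{2}} \le C/\tau$, uniformly in $k$.

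Finally, setting $G := G_{1}G_{2}$ gives $\|G\|_{L^{2}\to L^{2}} \le C_{1}/\tau^{2}$, and the equation for $r$ becomes the fixed-point problem $r = -G\bigl[\chi_{\Omega}\, q\,(1+r)\bigr]$, where $q$ is extended by zero outside $\Omega$. The Lipschitz constant of the right-hand side is $C_{1}\|q\|_{L^{\infty}(\Omega)}/\tau^{2}$, which is at most $1/2$ as soon as $\tau \ge C_{0}\sqrt{\|q\|_{L^{\infty}(\Omega)}}$ with $C_{0} := \sqrt{2C_{1}}$. The resulting Neumann series produces $r\in L^{2}(Q)$ with $\|r\|_{L^{2}(Q)} \le 2C_{1}\|q\|_{L^{\infty}(\Omega)}/\tau^{2}$, and restriction to $\Omega$ yields the claimed bound. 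The main technical obstacle is the need to control both factors of the symbol $p_{\zeta}(p_{\zeta}-2k^{2})$ simultaneously and uniformly in $k$; the coincidence of their imaginary parts, noted above, is precisely what allows a single H\"ahner shift to do the job and keeps all constants independent of the frequency.
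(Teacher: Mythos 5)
Your argument is correct and follows essentially the same route as the paper: conjugation by $e^{i\zeta\cdot x}$, H\"ahner's shifted-lattice Fourier series on a cube containing $\Omega$, a lower bound on the symbol coming from its imaginary part $2|\mathrm{Im}\,\zeta|\,|\xi_n|\gtrsim|\mathrm{Im}\,\zeta|$ (which, being unaffected by the real shift $2k^2$, controls both factors and is uniform in $k$), and a Neumann-series closure of the fixed-point equation for $r$. The only cosmetic difference is that you invert the two second-order factors $L_\zeta$ and $L_\zeta-2k^2$ separately and compose, whereas the paper divides directly by the quartic symbol $p_l(p_l+2k^2)$ using the identical bound $|p_l|\,|p_l+2k^2|\ge|\mathrm{Im}\,p_l|\,|\mathrm{Im}(p_l+2k^2)|\ge|\mathrm{Im}\,\zeta|^2$.
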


\begin{proof}
We shall follow \cite{Hahner_1996}, see also   \cite{FSU_book}. Let $\zeta\in \C^n$ be such that $\zeta\cdot\zeta=k^2$.  Then we have
\[
e^{-i\zeta\cdot x}\circ \Delta^2\circ e^{i\zeta\cdot x}=(D^2+2\zeta\cdot D)^2 +2k^2(D^2+2\zeta\cdot D)  +k^4,
\]
where $D_{x_j}=\frac{1}{i}\p_{x_j}$.
Thus, \eqref{CGO_2} is a solution to \eqref{CGO_1} if and only if 
\begin{equation}
\label{CGO_3}
[(D^2+2\zeta\cdot D)^2 +2k^2(D^2+2\zeta\cdot D) +q]r=-q\quad \text{in}\quad \Omega. 
\end{equation}
Writing $\zeta=w_1+iw_2$, where $w_1, w_2\in \R^n$, and using the fact that $\zeta\cdot\zeta=k^2$, we see that 
$w_1\cdot w_2=0$.  Performing an orthogonal transformation, we may therefore assume that  $w_1=|w_1|e_1$ and $w_2=|w_2|e_2$, where $e_1$ and $e_2$ are the first two vectors in the standard basis of $\R^n$.  In order to solve the equation \eqref{CGO_3}, let us first solve the following equation,
\begin{equation}
\label{CGO_4}
[(D^2+2\zeta\cdot D)^2 +2k^2(D^2+2\zeta\cdot D) ]r=f\quad \text{in}\quad \Omega. 
\end{equation}
where $f\in L^2(\Omega)$. 
In doing so let us assume for simplicity that $\Omega\subset Q:=[-\pi, \pi]^n$. In the following everything works without this extra assumption if we replace the set $\Omega$ by its image under the map $\R^n\ni x\mapsto\kappa x\in \R^n$ for some fixed $\kappa>0$ sufficiently small. Let us extend $f$ by zero outside of $\Omega$ into $Q$, and let us denote this extension again by $f$.  Thus, it suffices to solve the following equation, 
\begin{equation}
\label{CGO_5}
[(D^2+2|w_1|D_{x_1}+2i|w_2|D_{x_2})^2 +2k^2(D^2+2|w_1|D_{x_1}+2i |w_2|D_{x_2}) ]r=f\quad \text{in}\quad Q.
\end{equation}
To that end, let 
\[
v_l(x)=e^{i(l+\frac{1}{2}e_2)\cdot x}, \quad l\in \Z^n.
\]
The set $(v_l)_{l\in \Z^n}$ is an orthonormal basis in  $L^2(Q)$, see  \cite{FSU_book},  and therefore, $f$ can be written as a series 
\[
f=\sum_{l\in \Z^n} f_l v_l,
\]
where $f_l=(f,v_l)_{L^2(Q)}=(2\pi)^{-n}\int_{Q} f\overline{v_l}dx$, $\|f\|_{L^2(Q)}^2=\sum_{l\in \Z^n}|f_l|^2$. The reason for considering a shifted integer lattice rather than the standard integer coordinate lattice is due to the fact that the symbol of the operator in \eqref{CGO_5} is non-vanishing along the shifted lattice, see \eqref{CGO_5_new_100} below,   and the equation \eqref{CGO_5} can therefore be solved by division.

We look for a solution of \eqref{CGO_5} in the form
\[
r=\sum_{l\in \Z^n} r_l v_l,
\] 
and therefore, \eqref{CGO_5} leads to the following equation,
\[
(p_l^2+2k^2p_l)r_l=f_l, 
\]
where 
\[
p_l= \bigg(l+\frac{1}{2}e_2\bigg)^2+2|w_1|l_1 +2i |w_2|\bigg(l_2+\frac{1}{2}\bigg). 
\]
We have
\begin{equation}
\label{CGO_5_new_100}
|\text{Im}\, p_l|=|\text{Im}\, (p_l+2k^2)|=2 |w_2|\bigg|l_2+\frac{1}{2}\bigg|\ge |w_2|, \quad l_2\in \Z.
\end{equation}
Assume that $|w_2|\ne 0$. Letting  
\[
r_l:=\frac{f_l}{p_l^2+2k^2 p_l},
\]
we see that 
\begin{equation}
\label{CGO_6}
|r_l|\le \frac{|f_l|}{|w_2|^2},
\end{equation}
and therefore, $\|r\|_{L^2(Q)}\le \frac{1}{|w_2|^2}\|f\|_{L^2(Q)}$. Thus, we have shown that for any $\zeta\in \C^n$ such that $\zeta\cdot\zeta=k^2$ and $|\text{Im}\, \zeta|\ne 0$, the equation \eqref{CGO_4} has a solution operator, 
\begin{equation}
\label{CGO_6_1}
G_\zeta:  L^2(Q)\to L^2(Q), \quad f\mapsto r,
\end{equation}
such that $\|G_\zeta\|_{\mathcal{L}(L^2(Q),  L^2(Q))}\le \frac{1}{|\text{Im}\, \zeta|^2}$.  

Let us now return to the equation \eqref{CGO_3} and look for a solution in the form $r=G_\zeta \tilde r$, where $\tilde r\in L^2(Q)$ is to be determined. Then we get 
\[
(I+qG_\zeta)\tilde r=-q\quad \text{in}\quad L^2(Q). 
\]
Since $\|qG_\zeta\|_{\mathcal{L}(L^2(Q),  L^2(Q))}\le 1/2$ provided that $|\text{Im\,} \zeta|\ge \sqrt{2\|q\|_{L^\infty(Q)}}$, the operator $I+qG_\zeta$ is invertible on $L^2(Q)$ and $\tilde r=(I+qG_\zeta)^{-1}(-q)$. By the Neumann series, $\|\tilde r\|_{L^2(Q)}\le 2\|q\|_{L^2(Q)}$. Thus, $\|r\|_{L^2(Q)}\le \frac{2}{|\text{Im\,}\zeta|^2}\|q\|_{L^2(Q)}$.  This completes the proof of Proposition \ref{prop_cgo_Hahner}. 
\end{proof}

\section{Quantitative version of the Riemann--Lebesgue Lemma}
\label{sec_Riemann_Lebesgue}

The goal of this section is to prove a quantitative version of the Riemann--Lebesgue lemma for functions $f\in H^s(\R^n)$, $0< s <1$, with $\supp(f)$ compact. Another closely related version of the Riemann--Lebesgue lemma is established in \cite{Heck_Wang}, where it is applied to zero extensions of H\"older continuous functions defined on smooth bounded domains. 

In what follows let $\Psi_\tau(x)=\tau^{-n}\Psi (x/\tau)$, $\tau>0$, be the usual mollifier with $\Psi\in C^\infty_0(\R^n)$, $0\le \Psi\le 1$, and $\int_{\R^n} \Psi dx=1$. 

We shall need the following approximation result, which was established in \cite{Krupchyk_Uhlmann_2018}. 
\begin{lem}
\label{lem_approximation}
Let $f\in H^s(\R^n)$, $0\le s< 1$. Then $f_\tau=f*\Psi_\tau\in( C^\infty\cap H^s)(\R^n)$, and 
\[
\|f-f_\tau\|_{L^2(\R^n)}=o(\tau^s), \quad \tau\to 0. 
\]
\end{lem}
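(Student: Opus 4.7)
The plan is to pass to the Fourier side and exploit dominated convergence. Since $\Psi\in C^\infty_0(\R^n)$ has $\int\Psi\,dx=1$, its Fourier transform $\hat\Psi$ is smooth, bounded by $1$, and satisfies $\hat\Psi(0)=1$. The smoothness of $f_\tau=f*\Psi_\tau$ is immediate from convolution with a $C^\infty_0$ mollifier, and membership in $H^s$ follows from $\widehat{f_\tau}(\xi)=\hat f(\xi)\hat\Psi(\tau\xi)$ together with the boundedness of $\hat\Psi$. So only the rate estimate requires a real argument.

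By Plancherel, the claim is equivalent to
\[
\tau^{-2s}\int_{\R^n}|\hat f(\xi)|^2\,|1-\hat\Psi(\tau\xi)|^2\,d\xi\longrightarrow 0 \quad \text{as } \tau\to 0^{+}.
\]
First I would record the two-sided control of $1-\hat\Psi$: since $\hat\Psi\in C^1$ with $\hat\Psi(0)=1$ and $\hat\Psi$ is bounded, there is a constant $C>0$ such that
\[
|1-\hat\Psi(\tau\xi)|\le C\min\{\tau|\xi|,\,1\},
\]
which interpolates to $|1-\hat\Psi(\tau\xi)|^2\le C^{2}(\tau|\xi|)^{2s}$ for every $s\in[0,1]$. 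Therefore
\[
\tau^{-2s}|\hat f(\xi)|^{2}|1-\hat\Psi(\tau\xi)|^{2}\le C^{2}|\xi|^{2s}|\hat f(\xi)|^{2}\le C^{2}(1+|\xi|^{2})^{s}|\hat f(\xi)|^{2},
\]
and the right-hand side is in $L^{1}(\R^{n})$ precisely because $f\in H^{s}(\R^{n})$, giving a uniform integrable dominant.

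Next I would observe the pointwise limit: for each fixed $\xi$, the smoothness of $\hat\Psi$ near the origin yields $|1-\hat\Psi(\tau\xi)|=O(\tau|\xi|)$ as $\tau\to 0$, so
\[
\tau^{-2s}|1-\hat\Psi(\tau\xi)|^{2}=O(\tau^{2-2s})\longrightarrow 0,
\]
since $s<1$. (For $s=0$ the assertion is merely the standard $L^{2}$ convergence of mollification, which already gives little-$o$ of $1$.) Combining the uniform dominant with this pointwise vanishing, the dominated convergence theorem delivers the desired $o(\tau^{2s})$ decay of the Plancherel integral, hence $\|f-f_\tau\|_{L^{2}(\R^{n})}=o(\tau^{s})$.

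The only subtle point is obtaining little-$o$ rather than big-$O$: a crude pointwise bound of the form $|1-\hat\Psi(\tau\xi)|^{2}\lesssim(\tau|\xi|)^{2s}\wedge 1$ by itself only produces $O(\tau^{s})$. The gain to $o(\tau^{s})$ comes precisely from applying dominated convergence, which trades the uniform $H^{s}$-bound for the a.e.\ vanishing of the rescaled integrand as $\tau\to 0$. This is where the hypothesis $s<1$ is used essentially.
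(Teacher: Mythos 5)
Your argument is correct and is exactly the standard proof of this statement (the paper itself gives no proof, deferring to \cite{Krupchyk_Uhlmann_2018}, where the argument is the same Plancherel computation): the bound $|1-\hat\Psi(\tau\xi)|\le C\min\{\tau|\xi|,1\}\le C(\tau|\xi|)^{s}$ supplies the integrable dominant $C^{2}(1+|\xi|^{2})^{s}|\hat f(\xi)|^{2}$, and dominated convergence upgrades the resulting $O(\tau^{s})$ to $o(\tau^{s})$. Your closing remark correctly identifies the one subtle point, namely that the little-$o$ comes from the pointwise vanishing of $\tau^{-2s}|1-\hat\Psi(\tau\xi)|^{2}$ for $s<1$ rather than from the uniform bound alone.
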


The following result is a quantitative version of the Riemann--Lebesgue lemma. 
\begin{prop}
\label{prop_Riemann_Lebesgue}
Let $f\in H^s(\R^n)$, $0<s<1$, be such that $\supp (f)$ is compact. Then there exists constant $C>0$ and for any $N\in \N$, there exists $C_N>0$ such that for all $\xi\in \R^n$ and $0<\tau<1$,  we have 
\begin{equation}
\label{eq_4_3_0}
|\hat f (\xi)|\le \frac{C_N}{(1+ \tau |\xi|)^{N}}+ C\tau^s. 
\end{equation}
\end{prop}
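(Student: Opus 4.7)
The plan is to decompose $f$ using the mollified version from Lemma \ref{lem_approximation} and estimate the two resulting Fourier transforms separately, one by the Schwartz decay of $\widehat{\Psi}$ and the other by the quantitative approximation estimate.

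First I would write $f = f_\tau + (f - f_\tau)$ with $f_\tau = f * \Psi_\tau$, and apply the triangle inequality to $\hat f$. Since $f$ is compactly supported and in $L^2(\R^n)$, it lies in $L^1(\R^n)$, and for $0<\tau<1$ the supports of $f_\tau$ and hence of $f - f_\tau$ are contained in a fixed bounded set $K$ independent of $\tau$ (using $\supp \Psi_\tau \subset \tau \cdot \supp \Psi \subset \supp \Psi$). This compact-support control is the key enabler that allows me to pass freely between $L^2$ and $L^1$ bounds on $f - f_\tau$ via Cauchy--Schwarz.

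For the smooth piece, I would use $\widehat{f_\tau}(\xi) = \widehat f(\xi)\,\widehat\Psi(\tau\xi)$. Since $\Psi \in C^\infty_0(\R^n)$, its Fourier transform is Schwartz, so for every $N \in \N$ there exists $C_N>0$ with $|\widehat\Psi(\tau\xi)| \le C_N(1+\tau|\xi|)^{-N}$. Combined with $|\widehat f(\xi)| \le \|f\|_{L^1(\R^n)} < \infty$, this yields
\[
|\widehat{f_\tau}(\xi)| \le \frac{C_N \|f\|_{L^1(\R^n)}}{(1+\tau|\xi|)^{N}}.
\]
For the remainder piece, I would estimate pointwise
\[
|\widehat{f - f_\tau}(\xi)| \le \|f - f_\tau\|_{L^1(\R^n)} \le |K|^{1/2}\, \|f - f_\tau\|_{L^2(\R^n)},
\]
and then invoke Lemma \ref{lem_approximation} to conclude $\|f - f_\tau\|_{L^2(\R^n)} = o(\tau^s)$ as $\tau \to 0$. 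The little-$o$ statement needs to be upgraded to a uniform bound $\le C\tau^s$ on the whole interval $(0,1)$; this is immediate since $\|f - f_\tau\|_{L^2(\R^n)} \le 2\|f\|_{L^2(\R^n)}$ is also uniformly bounded, so on any interval $[\tau_0,1)$ we can simply absorb a constant, while on $(0,\tau_0)$ the little-$o$ bound gives exactly what we need. Adding the two estimates yields \eqref{eq_4_3_0}.

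I do not expect any serious obstacle here: the argument is a standard frequency-space/physical-space split, and Lemma \ref{lem_approximation} already does the heavy lifting on the $H^s$ regularity side. The only subtlety to get right is the interplay between the parameters: we must not let the constant in front of $(1+\tau|\xi|)^{-N}$ absorb a factor that depends badly on $\tau$, but since $\|f\|_{L^1}$ is independent of $\tau$ and $\widehat\Psi$ decays uniformly in its argument, this is automatic.
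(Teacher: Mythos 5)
Your proposal is correct and follows essentially the same route as the paper: split $f=f_\tau+(f-f_\tau)$, bound $\widehat{f_\tau}(\xi)=\widehat f(\xi)\widehat\Psi(\tau\xi)$ via $\|f\|_{L^1}$ and the Schwartz decay of $\widehat\Psi$, and control the remainder in $L^1$ through $L^2$ using compact support together with Lemma~\ref{lem_approximation}. Your explicit upgrade of the little-$o$ bound to a uniform $C\tau^s$ on all of $(0,1)$ via the crude bound $\|f-f_\tau\|_{L^2}\le 2\|f\|_{L^2}$ is exactly the role Young's inequality plays in the paper's argument.
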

\begin{proof}

We have 
\[
|\hat f(\xi)|\le |\hat f_\tau(\xi)|+  |\hat f_\tau(\xi)-\hat f(\xi)|.
\]
Using that $\hat \Psi_\tau(\xi)=\hat \Psi(\tau \xi)$, we get $\hat f_\tau(\xi)=\hat f(\xi)\hat \Psi(\tau \xi)$. As $\supp (f)$ is compact, we see that $f\in L^1(\R^n)$, and therefore, 
\begin{equation}
\label{eq_4_4}
| \hat f_\tau(\xi)|\le \|f\|_{L^1(\R^n)}|\hat \Psi(\tau \xi)|\le C\|f\|_{H^s(\R^n)}|\hat \Psi(\tau \xi)|.
 \end{equation}
 As $\hat \Psi\in \mathcal{S}(\R^n)$, we get 
\begin{equation}
\label{eq_4_5}
|\hat \Psi(\tau \xi)|\le \frac{C_N}{( 1+ \tau|\xi|)^N} 
\end{equation}
for all $\xi\in \R^n$, $\tau >0$, and $N\in \N$. Combining \eqref{eq_4_4} and \eqref{eq_4_5}, we obtain that 
\[
 |\hat f_\tau(\xi)|\le\frac{C_N}{(1+ \tau|\xi|)^N}
\]
for all $\xi\in \R^n$, $\tau >0$, and $N\in \N$. 

Using Young's inequality, we see that $\|f_\tau - f\|_{L^2(\R^n)}\le 2\|f\|_{L^2(\R^n)}$ for  $\tau\in (0,\infty)$. Combining this with the fact that $\supp(f)$ is compact and using Lemma \ref{lem_approximation}, we get for all $0<\tau<1$, 
\[
|\hat f_\tau(\xi)-\hat f(\xi)|\le \|f_\tau - f\|_{L^1(\R^n)} \le C\|f_\tau - f\|_{L^2(\R^n)}\le C\tau ^s.  
\]
This completes the proof of Proposition \ref{prop_Riemann_Lebesgue}.
\end{proof}

\section{Proofs of Theorem \ref{thm_main} and Corollary \ref{cor_main}}

\label{sec_proof}

\subsection{Derivation of the integral inequality}

\begin{lem}
\label{lem_int_identity}
Let $q_1, q_2\in L^\infty(\Omega)$ and $k\ge 0$. We have 
\begin{align*}
\bigg|\int_{\Omega}(q_2&-q_1)u_1u_2 dx\bigg|
\le 4 \| (u_1|_{\Gamma}, (\Delta u_1)|_{\Gamma}, (\p_\nu u_1)|_{\Gamma}, \p_\nu(\Delta u_1)|_{\Gamma})  \|_{H^{\frac{7}{2}, \frac{3}{2},\frac{5}{2},\frac{1}{2}}(\Gamma)}\\
 &\| (u_2|_{\Gamma}, (\Delta u_2)|_{\Gamma}, (\p_\nu u_2)|_{\Gamma}, \p_\nu(\Delta u_2)|_{\Gamma})  \|_{H^{\frac{7}{2}, \frac{3}{2},\frac{5}{2},\frac{1}{2}}(\Gamma)} \emph{\text{dist}}(C_{q_1}^\Gamma(k), C_{q_2}^\Gamma (k)),
\end{align*}
for any $u_1, u_2\in H^4(\Omega)$ such that 
\begin{align*}
(\Delta^2-k^4+q_1)u_1=0 \quad \text{in}\quad \Omega, \quad u_1|_{\Gamma_0}=(\Delta u_1)|_{\Gamma_0}=0,\\
(\Delta^2-k^4+q_2)u_2=0 \quad \text{in}\quad \Omega, \quad u_2|_{\Gamma_0}=(\Delta u_2)|_{\Gamma_0}=0.
\end{align*}
\end{lem}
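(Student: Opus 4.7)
The strategy is the standard Alessandrini--type integral identity adapted to the fourth--order, partial--data setting. Fix any $u_1, u_2$ as in the statement and set
\[
f_1 := (u_1|_{\Gamma}, (\Delta u_1)|_{\Gamma}, (\p_\nu u_1)|_{\Gamma}, \p_\nu(\Delta u_1)|_{\Gamma}) \in C_{q_1}^\Gamma(k).
\]
By the definition of $\text{dist}(C_{q_1}^\Gamma(k), C_{q_2}^\Gamma(k))$, for every $\varepsilon>0$ there is some $\tilde f\in C_{q_2}^\Gamma(k)$, arising as the Cauchy data on $\Gamma$ of a function $\tilde u_1 \in H^4(\Omega)$ satisfying $(\Delta^2-k^4+q_2)\tilde u_1=0$ in $\Omega$ with $\tilde u_1|_{\Gamma_0}=(\Delta\tilde u_1)|_{\Gamma_0}=0$, such that
\[
\|f_1-\tilde f\|_{H^{\frac72,\frac32,\frac52,\frac12}(\Gamma)} \le \bigl(\text{dist}(C_{q_1}^\Gamma(k),C_{q_2}^\Gamma(k))+\varepsilon\bigr)\|f_1\|_{H^{\frac72,\frac32,\frac52,\frac12}(\Gamma)}.
\]
Let $w:=u_1-\tilde u_1\in H^4(\Omega)$, which vanishes together with $\Delta w$ on $\Gamma_0$.

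Next I would apply Green's identity for the biharmonic operator to $w$ and $u_2$,
\[
\int_{\Omega}\bigl((\Delta^2 w)u_2 - w(\Delta^2 u_2)\bigr)\,dx = \int_{\p\Omega}\bigl(\p_\nu(\Delta w)\,u_2 - \Delta w\,\p_\nu u_2 + \Delta u_2\,\p_\nu w - \p_\nu(\Delta u_2)\,w\bigr)\,dS.
\]
Using $\Delta^2 u_1 = (k^4-q_1)u_1$, $\Delta^2 \tilde u_1 = (k^4-q_2)\tilde u_1$ and $\Delta^2 u_2 = (k^4-q_2)u_2$, a direct cancellation reduces the left-hand side to $\int_\Omega (q_2-q_1)u_1 u_2\,dx$. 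On $\Gamma_0$, the homogeneous conditions satisfied by $w$, $\Delta w$, $u_2$ and $\Delta u_2$ make every one of the four boundary terms vanish, so only the integral over $\Gamma$ survives.

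It then remains to estimate each of the four $\Gamma$-integrals using the $H^{s}(\Gamma)$--$H^{-s}(\Gamma)$ duality pairing and the trivial continuous embedding $H^{t}(\Gamma)\hookrightarrow H^{-s}(\Gamma)$ for $t\ge -s$. For example,
\[
\Bigl|\int_\Gamma \p_\nu(\Delta w)\,u_2\,dS\Bigr|\le \|\p_\nu(\Delta w)\|_{H^{\frac12}(\Gamma)}\|u_2\|_{H^{-\frac12}(\Gamma)}\le \|w\|_{H^{\frac72,\frac32,\frac52,\frac12}(\Gamma)}\|u_2\|_{H^{\frac72,\frac32,\frac52,\frac12}(\Gamma)},
\]
and the analogous bounds hold for the three remaining terms (pairing $H^{3/2}$ with $H^{5/2}$, $H^{3/2}$ with $H^{5/2}$, and $H^{1/2}$ with $H^{7/2}$, all with constant $1$ in the duality pairing). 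Summing yields the factor $4$ and, recalling that the Cauchy data of $w$ on $\Gamma$ is $f_1-\tilde f$, gives the claimed inequality after letting $\varepsilon\to 0$.

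The only mild technical point is making sure the four boundary pairings are genuinely bounded by the product of the $H^{\frac72,\frac32,\frac52,\frac12}(\Gamma)$ norms without producing an extra geometric constant; this is automatic because each pairing uses exactly one factor in its natural dual space and the other factor in a strictly stronger Sobolev space on $\Gamma$, so the duality bracket is controlled by the $L^2$--$L^2$ product of the corresponding traces, hence by the stated norms. No further ingredient is needed.
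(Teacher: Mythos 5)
Your proof is correct and follows essentially the same route as the paper: the Green's formula for $\Delta^2$, the vanishing of all four boundary terms on $\Gamma_0$ thanks to the Navier-type conditions, approximation of one solution's Cauchy data by the other Cauchy data set, and termwise $L^2(\Gamma)$ Cauchy--Schwarz giving the factor $4$. The only (immaterial) difference is that you approximate $u_1$'s data within $C_{q_2}^\Gamma(k)$ and apply Green's identity once to the difference $w=u_1-\tilde u_1$, whereas the paper approximates $u_2$'s data within $C_{q_1}^\Gamma(k)$ and subtracts two applications of Green's identity; by the symmetry of $\text{dist}$ these are equivalent.
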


\begin{proof}
We shall need the following Green's formula, see \cite{Grubb_book},
\begin{equation}
\label{eq_2_1}
\begin{aligned}
\int_{\Omega} (\Delta^2 u)v dx-\int_{\Omega} u(\Delta^2v)dx&=\int_{\p \Omega}\p_\nu(\Delta u) vdS-\int_{\p \Omega} (\Delta u)\p_\nu v dS\\
& +\int_{\p \Omega} \p_\nu u (\Delta v)dS -\int_{\p \Omega} u\p_\nu (\Delta v)dS,
\end{aligned}
\end{equation}
 valid for  $u,v\in H^4(\Omega)$. 
 
 Let $u_1, u_2\in H^4(\Omega)$ be solutions to 
\begin{equation}
\label{eq_2_2}
(\Delta^2-k^4+q_1)u_1=0, \quad (\Delta^2-k^4+q_2)u_2=0, \quad \text{in}\quad \Omega, 
\end{equation}
respectively, such that $u_1|_{\Gamma_0}=(\Delta u_1)|_{\Gamma_0}=0$ and $u_2|_{\Gamma_0}=(\Delta u_2)|_{\Gamma_0}=0$. Multiplying the first equation in \eqref{eq_2_2} by $u_2$ and using the Green's formula \eqref{eq_2_1}, we obtain that 
\begin{equation}
\label{eq_2_3}
\begin{aligned}
\int_{\Omega}(q_2-q_1)u_1u_2 dx=& \int_{\Gamma}\p_\nu(\Delta u_1) u_2dS-\int_{\Gamma} (\Delta u_1)\p_\nu u_2 dS \\
&+\int_{\Gamma} \p_\nu u_1 (\Delta u_2)dS -\int_{\Gamma} u_1\p_\nu (\Delta u_2)dS.
\end{aligned}
\end{equation}
Let $f=(f_1,f_2, f_3, f_4)\in C_{q_1}^\Gamma(k)$ be arbitrary. Then there exists $v\in H^4(\Omega)$ such that 
\begin{equation}
\label{eq_2_4}
\begin{aligned}
&(\Delta^2-k^4+q_1)v=0\quad \Omega, \\
&v|_{\Gamma_0}=(\Delta v)|_{\Gamma_0}=0, \\
&v|_{\Gamma}=f_1,\quad  (\Delta v)|_{\Gamma}=f_2,\quad (\p_\nu v)|_{\Gamma}=f_3, \quad \p_\nu(\Delta v)|_{\Gamma}=f_4.  
\end{aligned}
\end{equation}

Indeed, this follows from the definition of the set of the Cauchy data $C_{q_1}^\Gamma(k)$ together with the fact that the boundary value problem,
\begin{equation}
\label{eq_2_4_new_100}
\begin{aligned}
&(\Delta^2-k^2+q_1)u=0\quad \text{in}\quad \Omega,\\
&u|_{\p \Omega}=g_1\in H^{\frac{7}{2}}(\p \Omega),\\
&(\Delta u)|_{\p \Omega}=g_2\in H^{\frac{3}{2}}(\p \Omega),
\end{aligned}
\end{equation} 
enjoys the Fredholm property. The latter may be seen, for instance, by rewriting   \eqref{eq_2_4_new_100} as a boundary value problem for a strongly elliptic system and applying \cite[Theorem 4.10]{McLean_book} together with elliptic boundary regularity.

Multiplying the first equation in \eqref{eq_2_2} by $v$,  using the Green formula \eqref{eq_2_1} and \eqref{eq_2_4}, we get
\begin{equation}
\label{eq_2_5}
\begin{aligned}
\int_{\Gamma}\p_\nu(\Delta u_1) f_1dS
 -\int_{\Gamma} (\Delta u_1)f_3 dS +\int_{\Gamma} \p_\nu u_1 f_2dS -\int_{\Gamma} u_1 f_4dS=0.
\end{aligned}
\end{equation}
Combining \eqref{eq_2_3} and \eqref{eq_2_5}, we get 
\begin{equation}
\label{eq_2_6}
\begin{aligned}
\int_{\Omega}(q_2-q_1)u_1u_2 dx=& \int_{\Gamma}\p_\nu(\Delta u_1) (u_2-f_1)dS-\int_{\Gamma} (\Delta u_1)(\p_\nu u_2 -f_3)dS \\
&+\int_{\Gamma} \p_\nu u_1 (\Delta u_2-f_2)dS -\int_{\Gamma} u_1(\p_\nu (\Delta u_2)-f_4)dS.
\end{aligned}
\end{equation}
Letting 
\[
\|f\|_{L^2(\Gamma)}=(\|f_1\|_{L^2}^2+\|f_2\|_{L^2}^2+\|f_3\|_{L^2}^2+ \|f_4\|_{L^2}^2)^{1/2},
\]
we see from \eqref{eq_2_6} that 
\begin{align*}
\bigg|\int_{\Omega}(q_2-q_1)u_1u_2 dx\bigg|\le 4 \| (u_1|_{\Gamma}, (\Delta u_1)|_{\Gamma}, (\p_\nu u_1)|_{\Gamma}, \p_\nu(\Delta u_1)|_{\Gamma})  \|_{L^2(\Gamma)}\\
\| (u_2|_{\Gamma}-f_1, (\Delta u_2)|_{\Gamma}-f_2, (\p_\nu u_2)|_{\Gamma}-f_3, \p_\nu(\Delta u_2)|_{\Gamma}-f_4)  \|_{L^2(\Gamma)},
\end{align*}
and therefore, 
\begin{equation}
\label{eq_2_7}
\begin{aligned}
\bigg|&\int_{\Omega}(q_2-q_1)u_1u_2 dx\bigg|\le  4 \| (u_1|_{\Gamma}, (\Delta u_1)|_{\Gamma}, (\p_\nu u_1)|_{\Gamma}, \p_\nu(\Delta u_1)|_{\Gamma})  \|_{H^{\frac{7}{2}, \frac{3}{2},\frac{5}{2},\frac{1}{2}}(\Gamma)}\\
&\inf_{f\in C_{q_1}^{\Gamma}(k)}\| (u_2|_{\Gamma}-f_1, (\Delta u_2)|_{\Gamma}-f_2, (\p_\nu u_2)|_{\Gamma}-f_3, \p_\nu(\Delta u_2)|_{\Gamma}-f_4)  \|_{H^{\frac{7}{2}, \frac{3}{2},\frac{5}{2},\frac{1}{2}}(\Gamma)}\\
&\le  4 \| (u_1|_{\Gamma}, (\Delta u_1)|_{\Gamma}, (\p_\nu u_1)|_{\Gamma}, \p_\nu(\Delta u_1)|_{\Gamma})  \|_{H^{\frac{7}{2}, \frac{3}{2},\frac{5}{2},\frac{1}{2}}(\Gamma)}\\
 &\| (u_2|_{\Gamma}, (\Delta u_2)|_{\Gamma}, (\p_\nu u_2)|_{\Gamma}, \p_\nu(\Delta u_2)|_{\Gamma})  \|_{H^{\frac{7}{2}, \frac{3}{2},\frac{5}{2},\frac{1}{2}}(\Gamma)} \text{dist}(C_{q_1}^\Gamma(k), C_{q_2}^\Gamma (k)).
\end{aligned}
\end{equation}
This completes the proof of Lemma \ref{lem_int_identity}. 
\end{proof}

An application of the trace theorem gives the following  corollary of  Lemma \ref{lem_int_identity}, see \cite[Theorem 5.1.7]{Agranovich_book}. 
\begin{cor}
\label{cor_int_identity}
Let $q_1, q_2\in L^\infty(\Omega)$ and $k\ge 0$. We have 
\begin{align*}
\bigg|\int_{\Omega}(q_2&-q_1)u_1u_2 dx\bigg|
\le C \| u_1\|_{H^4(\Omega)} \|u_2\|_{H^4(\Omega)} \emph{\text{dist}}(C_{q_1}^\Gamma(k), C_{q_2}^\Gamma (k)),
\end{align*}
for any $u_1, u_2\in H^4(\Omega)$ such that 
\begin{align*}
(\Delta^2-k^4+q_1)u_1=0, \quad \text{in}\quad \Omega, \quad u_1|_{\Gamma_0}=(\Delta u_1)|_{\Gamma_0}=0,\\
(\Delta^2-k^4+q_2)u_2=0, \quad \text{in}\quad \Omega, \quad u_2|_{\Gamma_0}=(\Delta u_2)|_{\Gamma_0}=0.
\end{align*}
\end{cor}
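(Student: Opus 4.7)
The plan is to derive Corollary \ref{cor_int_identity} as a direct consequence of Lemma \ref{lem_int_identity} by bounding each of the four boundary trace quantities appearing on the right-hand side in terms of $\|u_j\|_{H^4(\Omega)}$, $j=1,2$, via the trace theorem cited in the hint (\cite[Theorem 5.1.7]{Agranovich_book}).

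First, I would recall that for any $u\in H^4(\Omega)$, the standard trace theorem provides the continuous mappings $u\mapsto u|_{\p\Omega}\in H^{7/2}(\p\Omega)$ and $u\mapsto \p_\nu u|_{\p\Omega}\in H^{5/2}(\p\Omega)$, with norms controlled by $\|u\|_{H^4(\Omega)}$. Since the Laplacian $\Delta:H^4(\Omega)\to H^2(\Omega)$ is bounded, the same trace theorem applied to $\Delta u\in H^2(\Omega)$ yields $(\Delta u)|_{\p\Omega}\in H^{3/2}(\p\Omega)$ and $\p_\nu(\Delta u)|_{\p\Omega}\in H^{1/2}(\p\Omega)$, each bounded by $\|\Delta u\|_{H^2(\Omega)}\le C\|u\|_{H^4(\Omega)}$.

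Next, since restriction from $\p\Omega$ to the relatively open subset $\Gamma\subset\p\Omega$ is bounded in every Sobolev norm, one obtains
\[
\|(u|_{\Gamma},(\Delta u)|_{\Gamma},(\p_\nu u)|_{\Gamma},\p_\nu(\Delta u)|_{\Gamma})\|_{H^{\frac{7}{2},\frac{3}{2},\frac{5}{2},\frac{1}{2}}(\Gamma)}\le C\|u\|_{H^4(\Omega)}
\]
for every $u\in H^4(\Omega)$, with $C$ depending only on $\Omega$. Plugging this estimate in for both factors on the right-hand side of the inequality in Lemma \ref{lem_int_identity} yields the claimed bound, with the constant $4$ absorbed into $C$.

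No real obstacle is expected here; the result is purely an application of elliptic trace theory combined with the previously established boundary integral inequality. The only mild care required is in verifying that the Sobolev exponents assigned to each trace match those appearing in the product space $H^{\frac{7}{2},\frac{3}{2},\frac{5}{2},\frac{1}{2}}(\Gamma)$, which they do in view of the two-step application of the trace theorem (once to $u$ at order $4$, once to $\Delta u$ at order $2$).
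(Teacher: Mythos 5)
Your proposal is correct and is exactly the paper's argument: the paper derives the corollary from Lemma \ref{lem_int_identity} by a single application of the trace theorem \cite[Theorem 5.1.7]{Agranovich_book}, bounding each of the four boundary traces by $C\|u_j\|_{H^4(\Omega)}$ precisely as you do (applying it once to $u_j\in H^4(\Omega)$ and once to $\Delta u_j\in H^2(\Omega)$). The exponent bookkeeping you carry out matches the space $H^{\frac{7}{2},\frac{3}{2},\frac{5}{2},\frac{1}{2}}(\Gamma)$, so there is nothing to add.
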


\subsection{Completing  of the proof of Theorem \ref{thm_main}}

Let $0\ne \xi\in \R^n$, $n\ge 3$. We write $\xi=(\xi',\xi_n)$, where $\xi'=(\xi_1,\dots, \xi_{n-1})\in \R^{n-1}$.  Assume first that $\xi' \ne 0$. We then define $e(1)=(\frac{\xi'}{|\xi'|},0)$, $e(n)=(0,\dots, 0, 1)$, and let us complete $e(1)$, $e(n)$ to an orthonormal basis in $\R^n$, which we shall denote by $\{e(1), e(2), \dots, e(n)\}$. In this basis, the vector $\xi$ has the coordinate representation $\tilde \xi=(|\xi'|,0,\dots,0,\xi_n )$. If $\xi'=0$, we let $e(1), \dots, e(n)$ be the standard basis in $\R^n$.

For  $\eta^{(1)},\eta^{(2)}\in \R^n$, we denote by $\tilde \eta^{(1)}, \tilde \eta^{(2)}$ their coordinate representations in the  basis $\{e(1), \dots, e(n)\}$, and we have 
\[
\eta^{(1)}\cdot \eta^{(2)}=\tilde \eta^{(1)}\cdot \tilde \eta^{(2)}, \quad \eta^{(1)}_n=\tilde \eta^{(1)}_n,\quad \eta^{(2)}_n=\tilde \eta^{(2)}_n.
\]
Let us denote by $\mu^{(1)}, \mu^{(2)}$ the vectors in $\R^n$ such that 
\begin{equation}
\label{eq_3_0_0}
\tilde \mu^{(1)}=\bigg(-\frac{\xi_n}{|\xi|}, 0,\dots, 0, \frac{|\xi'|}{|\xi|}\bigg) , \quad \tilde \mu^{(2)}=(0,1,0,\dots, 0).
\end{equation}
Thus, we get $|\mu^{(1)}|=|\mu^{(2)}|=1$ and $\mu^{(1)}\cdot\mu^{(2)}=\mu^{(1)}\cdot \xi=\mu^{(2)}\cdot \xi=0$. Let $k\ge 0$ and set
\begin{equation}
\label{eq_3_0_1}
\begin{aligned}
 \zeta_1=-\frac{ \xi}{2}+\sqrt{k^2+a^2-\frac{|\xi|^2}{4}}\mu^{(1)}+i a \mu^{(2)},\\
 \zeta_2=-\frac{\xi}{2}-\sqrt{k^2+a^2-\frac{|\xi|^2}{4}} \mu^{(1)}-i a \mu^{(2)},
\end{aligned}
\end{equation}
where $a\in \R$ is such that $k^2+a^2\ge \frac{|\xi|^2}{4}$. Note that $\zeta_j\cdot\zeta_j=k^2$, $j=1,2$. 

Let $q_j\in (L^\infty\cap H^s)(\Omega)$ with some $0<s<1/2$, $j=1,2$.  We extend $q_j$ by zero to $\R^n\setminus \Omega$ and denote these extensions by the same letters. It follows from  \cite[Theorem 5.1.10]{Agranovich_book} that $q_j\in (L^\infty\cap H^s)(\R^n)$. 

We shall next construct complex geometric optics solutions to the equations
\begin{equation}
\label{eq_3_0}
(\Delta^2-k^4+q_j) u_j=0\quad \text{in}\quad \Omega,
\end{equation}
which satisfy the following conditions,
\begin{equation}
\label{eq_3_1}
u_j|_{\Gamma_0}=(\Delta u_j)|_{\Gamma_0}=0,
\end{equation}
$j=1,2$. 
Following \cite{Isakov_local}, in order to fulfill the condition \eqref{eq_3_1}, we reflect $\Omega$ with respect to the plane $x_n=0$ and denote this reflection by 
\[
\Omega^*:=\{(x',-x_n)\in \R^n :x=(x',x_n)\in \Omega\},
\]
where $x'=(x_1,\dots, x_{n-1})$.  Let us denote by $q_j^{\text{even}}$ the even extension of $q_j|_{\R^n_-}$ to $\R^n_+$, 
\begin{equation}
\label{eq_3_1_1}
q_j^{\text{even}}(x)=\begin{cases} q_j(x',x_n), & x_n<0,\\
q_j(x', -x_n), & x_n>0.
\end{cases}
\end{equation}
By \cite[Theorem 3.5.1]{Agranovich_book},  we see that  $q_j^{\text{even}}\in (L^\infty\cap H^s)(\R^n)$. 

Let $B=B(0,R) $ be a ball in $\R^n$, centered at $0$, of radius $R\ge 1$, such that $\Omega\cup \Omega^*\subset\subset B$. By Proposition \ref{prop_cgo_Hahner}, there are constants $C_0>0$ and $C_1>0$, depending on $B$ and $n$ only, such that  for  $|\text{Im}\zeta_j|=a\ge \max \{C_0\sqrt{M}, 1\}$, the equation
\begin{equation}
\label{eq_3_2}
(\Delta^2-k^4+q_j^{\text{even}}) \tilde u_j=0\quad \text{in}\quad B,
\end{equation}
has a solution of the form
\begin{equation}
\label{eq_3_3}
\tilde u_j(x)=e^{i\zeta_j\cdot x}(1+r_j(x)),
\end{equation}
where $r_j\in L^2(B)$ satisfies 
\begin{equation}
\label{eq_3_4}
\|r\|_{L^2(B)}\le \frac{C_1}{a^2}\|q_j\|_{L^\infty(\Omega)}.
\end{equation}
By the interior elliptic regularity,  we have  $\tilde u_j\in H^4(\Omega\cup \Omega^*)$, and in view of \eqref{eq_3_2}, we have for all $k\ge 1$, 
\begin{equation}
\label{eq_3_5}
\|\tilde u_j\|_{H^4(\Omega\cup \Omega^*)}\le C k^4 \|\tilde u_j\|_{L^2(B)},
\end{equation}
see \cite[Theorem 6.29]{Grubb_book}.
It follows from \eqref{eq_3_5} that 
\begin{equation}
\label{eq_3_6}
\|\tilde u_j\|_{H^4(\Omega\cup \Omega^*)}\le C k^4 e^{aR}.
\end{equation}

Now let 
\begin{equation}
\label{eq_3_7}
u_j(x)=\tilde u_j(x',x_n)-\tilde u_j(x',-x_n), \quad x\in \Omega. 
\end{equation}
We have $u_j\in H^4(\Omega)$ and $u_j$ satisfies \eqref{eq_3_0} and \eqref{eq_3_1}.

By Corollary \ref{cor_int_identity} and \eqref{eq_3_6}, we get  for all $k\ge 1$ and $a\ge \max \{C_0\sqrt{M}, 1\}$ satisfying $k^2+a^2\ge \frac{|\xi|^2}{4}$, 
\begin{equation}
\label{eq_3_8}
\begin{aligned}
\bigg|\int_{\Omega}(q_2&-q_1)u_1u_2 dx\bigg|
\le C e^{2aR}k^8 \emph{\text{dist}}(C_{q_1}^\Gamma(k), C_{q_2}^\Gamma (k)),
\end{aligned}
\end{equation}
where $u_1, u_2$ are given by \eqref{eq_3_7}.  We shall next substitute $u_1, u_2$ given by \eqref{eq_3_7} into \eqref{eq_3_8}. To that end, using \eqref{eq_3_0_1} and \eqref{eq_3_0_0}, we see that 
\begin{equation}
\label{eq_3_9}
\begin{aligned}
e^{i(\zeta_1+\zeta_2)\cdot x}=e^{-i\xi\cdot x}, \quad e^{i(\zeta_1+\zeta_2)\cdot (x',-x_n)}=e^{-i\xi \cdot(x', -x_n)},\\
e^{i (\zeta_1\cdot (x', x_n)+\zeta_2\cdot (x',-x_n))}= e^{-i\xi_-\cdot x}, \quad e^{i (\zeta_1\cdot (x', -x_n)+\zeta_2\cdot (x',x_n))}= e^{-i\xi_+\cdot x},
\end{aligned}
\end{equation}
where 
\begin{equation}
\label{eq_3_10}
\xi_{\pm}=\bigg(\xi', \pm 2\sqrt{k^2+a^2-\frac{|\xi|^2}{4}}\frac{|\xi'|}{|\xi|}\bigg)\in \R^n.
\end{equation}
Substituting $u_1, u_2$ given by \eqref{eq_3_7} into \eqref{eq_3_8}, and using \eqref{eq_3_9} and \eqref{eq_3_4}, we obtain that 
\begin{equation}
\label{eq_3_11}
\begin{aligned}
\bigg|\int_{\Omega}(q_2-q_1)[e^{-i\xi\cdot x} + e^{-i\xi \cdot(x', -x_n)}-e^{-i\xi_-\cdot x}- e^{-i\xi_+\cdot x} ]   dx\bigg|\\
\le C e^{2aR}k^8 \emph{\text{dist}}(C_{q_1}^\Gamma(k), C_{q_2}^\Gamma (k)) + \frac{C}{a^2}
\end{aligned}
\end{equation}
for all $k\ge 1$ and $a\ge \max \{C_0\sqrt{M}, 1\}$ satisfying $k^2+a^2\ge \frac{|\xi|^2}{4}$. Recalling the definition \eqref{eq_3_1_1} of $q_j^{\text{even}}$, and making a change of variable, we get from \eqref{eq_3_11} that 
\begin{align*}
&\bigg|\int_{\Omega\cup \Omega^*}(q^{\text{even}}_1-q^{\text{even}}_2)e^{-i\xi\cdot x}  dx\bigg|\\
&\le 
\bigg|\int_{\Omega} (q_1-q_2) (e^{-i\xi_-\cdot x}+ e^{-i\xi_+\cdot x})dx \bigg|+
C e^{2aR}k^8 \emph{\text{dist}}(C_{q_1}^\Gamma(k), C_{q_2}^\Gamma (k)) + \frac{C}{a^2},
\end{align*}
and therefore, 
\begin{equation}
\label{eq_3_12}
\begin{aligned}
\big|\mathcal{F}(q^{\text{even}}_1-q^{\text{even}}_2)(\xi)\big| \le& \big|\mathcal{F}(q_1-q_2)(\xi_+)\big| + \big|\mathcal{F}(q_1-q_2)(\xi_-)\big|\\
&+
C e^{2aR}k^8 \emph{\text{dist}}(C_{q_1}^\Gamma(k), C_{q_2}^\Gamma (k)) + \frac{C}{a^2},
\end{aligned}
\end{equation}
for all $k\ge 1$ and $a\ge \max \{C_0\sqrt{M}, 1\}$ satisfying $k^2+a^2\ge \frac{|\xi|^2}{4}$. 

In view of \eqref{eq_3_10}, we have
\[
|\xi_\pm |=\frac{|\xi'|}{|\xi|}2\sqrt{k^2+a^2},
\]
and therefore, by Proposition \ref{prop_Riemann_Lebesgue}, we get for all $N\in \N$ and $0<\tau<1$, 
\begin{equation}
\label{eq_3_14}
|\mathcal{F}(q_1-q_2)(\xi_\pm)|\le \frac{C_N}{\bigg(1+ \tau \frac{|\xi'|}{|\xi|}2\sqrt{k^2+a^2}\bigg)^N}+ C\tau^s.
\end{equation}
 With  $1\le \rho\le \sqrt{k^2+a^2}$ to be chosen, let us consider the set 
\[
E(\rho)=\{\xi\in\R^n: |\xi'|\le \rho, |\xi_n|\le \rho\}.
\]
An application of  Parseval's formula gives 
\begin{equation}
\label{eq_3_19}
\begin{aligned}
\|q^{\text{even}}_1-q^{\text{even}}_2 \|^2_{H^{-1}(\R^n)}\le \bigg( \int_{E(\rho)}+\int_{ \R^n\setminus E(\rho)} \bigg)\frac{|\mathcal{F}(q^{\text{even}}_1-q^{\text{even}}_2)(\xi)|^2}{1+|\xi|^2}d\xi\\
\le   \int_{ E(\rho)} \frac{|\mathcal{F}(q^{\text{even}}_1-q^{\text{even}}_2)(\xi)|^2}{1+|\xi|^2}d\xi   +C\frac{1}{\rho^2}.
\end{aligned}
\end{equation}
We shall now estimate the integral in the right hand side of \eqref{eq_3_19}. 
To this end, using \eqref{eq_3_12}, we get  
\begin{equation}
\label{eq_3_20}
\begin{aligned}
\int_{E(\rho)} \frac{|\mathcal{F}(q^{\text{even}}_1-q^{\text{even}}_2)(\xi)|^2}{1+|\xi|^2}d\xi \le C e^{4aR}k^{16} \rho^n\text{dist}(C_{q_1}^\Gamma(k), C_{q_2}^\Gamma (k))^2 + \frac{C}{a^4}\rho^n\\
+ 4\int_{E(\rho)}  \big|\mathcal{F}(q_1-q_2)(\xi_+)\big|^2d\xi+ 4\int_{E(\rho)}  \big|\mathcal{F}(q_1-q_2)(\xi_-)\big|^2d\xi.
\end{aligned}
\end{equation}
Here we have used the inequality $(a+b+c+d)^2\le 4( a^2+b^2+c^2+d^2)$, $a,b,c,d\in \R$.

In view of \eqref{eq_3_14}, we have for all $N\in \N$ and $0<\tau<1$, 
\begin{equation}
\label{eq_3_21}
\begin{aligned}
\int_{E(\rho)}  \big|\mathcal{F}(q_1-q_2)(\xi_\pm)\big|^2d\xi\le C\rho^n\tau^{2s}+\int_{E(\rho)}  \frac{C_N}{\bigg(1+ \tau \frac{|\xi'|}{|\xi|}2\sqrt{k^2+a^2}\bigg)^N}d\xi.
\end{aligned}
\end{equation}
Using that $|\xi|\le 2\rho$ when $\xi\in E(\rho)$, and integrating in  $\xi_n$,   we get 
\begin{equation}
\label{eq_3_22}
\begin{aligned}
\int_{E(\rho)}  \frac{C_N}{\bigg(1+ \tau \frac{|\xi'|}{|\xi|}2\sqrt{k^2+a^2}\bigg)^N}d\xi\le 2\rho \int_{|\xi'|\le \rho}  \frac{C_N}{\bigg(1+ \tau \frac{|\xi'|}{\rho}\sqrt{k^2+a^2}\bigg)^N}d\xi'\\
\le C_N \bigg(\tau \frac{\sqrt{k^2+a^2}}{\rho}\bigg)^{1-n}  \rho\int_0^\infty \frac{y^{n-2}}{(1+y)^N}dy=C\rho^n\frac{1}{(\tau \sqrt{k^2+a^2})^{n-1}}.
\end{aligned}
\end{equation}
Here we have switched to the polar coordinates and chosen $N$ sufficiently large but fixed. Combining \eqref{eq_3_19}, \eqref{eq_3_20}, \eqref{eq_3_21}, and \eqref{eq_3_22}, we get 
\begin{equation}
\label{eq_3_23}
\begin{aligned}
\|q^{\text{even}}_1-q^{\text{even}}_2 \|^2_{H^{-1}(\R^n)}\le C\frac{1}{\rho^2}+ C e^{4aR}k^{16} \rho^n\text{dist}(C_{q_1}^\Gamma(k), C_{q_2}^\Gamma (k))^2 + \frac{C}{a^4}\rho^n\\
+  C\rho^n\tau^{2s}+ C\rho^n\frac{1}{(\tau \sqrt{k^2+a^2})^{n-1}},
\end{aligned}
\end{equation}
for $1\le \rho\le \sqrt{k^2+a^2}$, $0<\tau<1$, $k\ge 1$ and $a\ge \max \{C_0\sqrt{M}, 1\}$.

We shall now choose the small parameter $\tau$ suitably dependent on $k$ and $a$ so that the last two terms in the right hand side of \eqref{eq_3_23} are of the same order of magnitude. To this end, let us take $\tau$ such that 
\[
\tau^2=\frac{1}{(k^2+a^2)^{\frac{n-1}{2s+n-1}}}. 
\]
Thus, \eqref{eq_3_23} gives  that 
\begin{equation}
\label{eq_3_24}
\begin{aligned}
\|q^{\text{even}}_1-q^{\text{even}}_2 \|^2_{H^{-1}(\R^n)}\le C\frac{1}{\rho^2}+ C e^{4aR}k^{16} \rho^n\text{dist}(C_{q_1}^\Gamma(k), C_{q_2}^\Gamma (k))^2 + \frac{C}{a^4}\rho^n\\
+  C\rho^n\frac{1}{(k^2+a^2)^{\frac{(n-1)s}{2s+n-1}}}
\end{aligned}
\end{equation}
for $1\le \rho\le \sqrt{k^2+a^2}$, $k\ge 1$ and $a\ge \max \{C_0\sqrt{M}, 1\}$.  Later we shall choose $a\ge k$, and therefore, \eqref{eq_3_24} implies that 
\begin{equation}
\label{eq_3_25}
\begin{aligned}
\|q^{\text{even}}_1-q^{\text{even}}_2 \|^2_{H^{-1}(\R^n)}\le C\frac{1}{\rho^2}+ C e^{4aR}k^{16} \rho^n\text{dist}(C_{q_1}^\Gamma(k), C_{q_2}^\Gamma (k))^2 \\
+  C\rho^n\frac{1}{(k^2+a^2)^{\frac{(n-1)s}{2s+n-1}}}
\end{aligned}
\end{equation}
for $1\le \rho\le \sqrt{k^2+a^2}$, $k\ge 1$ and $a\ge \max \{C_0\sqrt{M}, 1,k\}$. 

Here we have used that 
\[
\frac{1}{a^4}\le \frac{C}{(k^2+a^2)^{\frac{(n-1)s}{2s+n-1}}},
\]
which follows from the fact that $k\le a$ and $\frac{(n-1)s}{2s+n-1}<2$, as $s\in(0,\frac{1}{2})$.

Choosing 
\[
\rho=(k^2+a^2)^{\alpha}, \quad 0<\alpha=\frac{(n-1)s}{(2s+n-1)(n+2)} <\frac{1}{2},  
\]
we achieve the equality of the first and the third terms on the right hand side of \eqref{eq_3_25}, and \eqref{eq_3_25} gives
\begin{equation}
\label{eq_3_26}
\begin{aligned}
\|q^{\text{even}}_1-q^{\text{even}}_2 \|^2_{H^{-1}(\R^n)}\le \frac{C}{(k^2+a^2)^{2\alpha}}+ C e^{4aR}k^{16} (k^2+a^2)^{\alpha n}\text{dist}(C_{q_1}^\Gamma(k), C_{q_2}^\Gamma (k))^2\\
\le \frac{C}{a^{4\alpha}}+ C e^{4aR}a^{16+2\alpha n} \text{dist}(C_{q_1}^\Gamma(k), C_{q_2}^\Gamma (k))^2\le 
 \frac{C}{a^{4\alpha}}+ C e^{5aR}\text{dist}(C_{q_1}^\Gamma(k), C_{q_2}^\Gamma (k))^2,
\end{aligned}
\end{equation}
for 
\begin{equation}
\label{eq_3_26_new_100}
a\ge \max \{C_0\sqrt{M}, k\}, \quad k\ge 1.
\end{equation}
Letting 
\[
\delta:=\text{dist}(C_{q_1}^\Gamma(k), C_{q_2}^\Gamma (k)),
\]
and using the fact that $0<\delta<\frac{1}{e}$, we finally choose 
\[
a=C_0\sqrt{M} k+\frac{\log\frac{1}{\delta}}{5R}. 
\]
This choice of $a$ is motivated, on the one hand, by the constraint \eqref{eq_3_26_new_100}, and on the other hand by the requirement that the right hand side of the resulting bound should vanish as $\delta \to 0^+$, to be able to recover the uniqueness result.

It follows therefore  from \eqref{eq_3_26} that 
\begin{equation}
\label{eq_3_27}
\begin{aligned}
\|q^{\text{even}}_1-q^{\text{even}}_2 \|^2_{H^{-1}(\R^n)}\le  
 \frac{C}{(k+\log \frac{1}{\delta})^{4\alpha}}+ e^{Ck}\delta
\end{aligned}
\end{equation}
for all $k\ge 1$. 

Let $0\ne \varphi\in C^\infty_0(\Omega)$. Then using \eqref{eq_3_27}, we get 
\begin{equation}
\label{eq_3_28}
\begin{aligned}
\bigg| \int_{\Omega} (q_1-q_2)\varphi dx\bigg|= \bigg| \int_{\Omega} (q^{\text{even}}_1-q^{\text{even}}_2)\varphi dx\bigg|\le
\|q^{\text{even}}_1-q^{\text{even}}_2 \|_{H^{-1}(\R^n)}\|\varphi\|_{H^1(\Omega)} \\
\le \bigg(  \frac{C}{(k+\log \frac{1}{\delta})^{2\alpha}}+ e^{Ck}\delta^{\frac{1}{2}} \bigg)\|\varphi\|_{H^1(\Omega)}.
\end{aligned}
\end{equation}
The bound \eqref{eq_int_2} follows from \eqref{eq_3_28} by recalling that 
\[
\|v\|_{H^{-1}(\Omega)}=\sup_{0\ne \varphi\in C^\infty_0(\Omega)}\frac{|\langle v,\varphi\rangle_{\Omega}|}{\|\varphi\|_{H^1(\Omega)}}, 
\]
where $\langle \cdot,\cdot\rangle_{\Omega}$ is the distributional duality on $\Omega$. This completes the proof of Theorem \ref{thm_main}.

\subsection{Proof of Corollary \ref{cor_main}}

We follow the classical argument due to Alessandrini \cite{Alessandrini}, see also   \cite{Caro_Marinov_2016}. Let $\varepsilon>0$ be such that $s=\frac{n}{2}+2\varepsilon$. Then by the Sobolev embedding, interpolation and the a priori bounds for $q_j$, we get for all $k\ge 1$, 
\begin{align*}
\|q_1-q_2\|_{L^\infty(\Omega)}\le C\|q_1-q_2 \|_{H^{\frac{n}{2}+\varepsilon}(\Omega)}\le C\|q_1-q_2 \|_{H^{-1}(\Omega)}^{\frac{\varepsilon}{1+s}}\|q_1-q_2 \|_{H^s(\Omega)}^{\frac{1-\varepsilon+s}{s+1}(\Omega)}\\
\le C(2M)^{\frac{1-\varepsilon+s}{s+1}}\|q_1-q_2 \|_{H^{-1}(\R^n)}^{\frac{\varepsilon}{1+s}}\le  \bigg( \frac{C}{(k+\log \frac{1}{\delta})^{2\alpha}}+ e^{Ck}\delta^{\frac{1}{2}} 
\bigg)^{\frac{s-\frac{n}{2}}{2(s+1)}}.
\end{align*}
This completes the proof of Corollary \ref{cor_main}.

\section*{Acknowledgements}
The author would like to thank Katya Krupchyk for her support and guidance, as well as the referee and the handling editor for numerous helpful remarks and suggestions. The research is partially supported by the National Science Foundation (DMS 1815922).


\begin{thebibliography} {1}

\bibitem{Agranovich_book}
Agranovich, M., \emph{Sobolev spaces, their generalizations and elliptic problems in smooth and Lipschitz domains}, Springer Monographs in Mathematics. Springer, Cham, 2015. 

\bibitem{Alessandrini}
Alessandrini, G., \emph{Stable determination of conductivity by boundary measurements}, Appl. Anal. \textbf{27} (1988), 153--172.

\bibitem{Assylbekov_Iyer}
Assylbekov, Y.,  Iyer, K., \emph{Determining rough first order perturbations of the polyharmonic operator},  Inverse Problems and Imaging \text{13} (2019), no. 5, 1045--1066. 

\bibitem{Assylbekov_Yang}
Assylbekov, Y.,  Yang, Y., \emph{Determining the first order perturbation of a polyharmonic operator on admissible manifolds}, J. Differential Equations \textbf{262} (2017), no. 1, 590--614. 

\bibitem{Bhatt_Ghosh}
Bhattacharyya, S.,  Ghosh, T.,  \emph{Inverse boundary value problem of determining up to a second order tensor appear in the lower order perturbation of a polyharmonic operator},  J. Fourier Anal. Appl. \textbf{25} (2019), no. 3, 661--683. 

\bibitem{Caro_Marinov_2016} 
Caro, P.,  Marinov, K., \emph{Stability of inverse problems in an infinite slab with partial data},  Comm. Partial Differential Equations \textbf{41} (2016), no. 4, 683--704.

\bibitem{Ch_Heck_2017}
Choudhury, A., Heck, H., \emph{Stability of the inverse boundary value problem for the biharmonic operator: logarithmic estimates.}
J. Inverse Ill-Posed Probl. \textbf{25} (2017), no. 2, 251--263. 

\bibitem{Ch_Heck_2018}
Choudhury, A., Heck, H., \emph{Increasing stability for the inverse problem for the Schr\"odinger equation},  Math. Methods Appl. Sci. \textbf{41} (2018), no. 2, 606--614. 

\bibitem{Ch_Venki_2015}
Choudhury, A., Krishnan., V., \emph{Stability estimates for the inverse boundary value problem for the biharmonic operator with bounded potentials}, J.  Math.  Anal. Appl. \textbf{431} (2015), 300--316.

\bibitem{Colton_Haddar}
Colton, D., Haddar, H.,  and  Piana, M., \emph{The linear sampling method in inverse electromagnetic scattering theory}, Inverse Problems, 19 (2003), S105--S137.

\bibitem{Faraco_Rogers}
Faraco, D., Rogers, K. M., \emph{The Sobolev norm of characteristic
functions with applications to the Calder\'on inverse problem}, Quart.
J. Math. \textbf{64} (2013), 133--147.

\bibitem{FSU_book}   Feldman, J.,  Salo, M.,   Uhlmann, G., \emph{The Calder\'on problem -- an introduction to inverse problems}, textbook,  \textsf{http://www.math.ubc.ca/$\sim$ feldman/ibook/}

\bibitem{Gaz_Grun_Sweers_book}
Gazzola, F., Grunau, H.-C., and Sweers, G., \emph{Polyharmonic boundary value problems}, Springer-Verlag, Berlin, 2010.

\bibitem{Ghosh_2016}
 Ghosh, T.,  Krishnan, V., \emph{Determination of lower order perturbations of the polyharmonic operator from partial boundary data},  Appl. Anal. \textbf{95} (2016), no. 11, 2444--2463.

\bibitem{Grubb_book}
Grubb, G., \emph{Distributions and operators}, volume 252 of Graduate Texts in Mathematics. Springer, New York, 2009.

\bibitem{Heck_Wang}
Heck, H.,  Wang, J., \emph{Optimal stability estimate of the inverse boundary value problem by partial measurements},  Rend. Istit. Mat. Univ. Trieste \textbf{48} (2016), 369--383.

\bibitem{Hahner_1996}
H\"ahner, P., \emph{A periodic Faddeev-type solution operator}, 
J. Differential Equations \textbf{128} (1996), no. 1, 300--308. 

\bibitem{Hry_Isakov} 
Hrycak, T., Isakov, V., \emph{Increased stability in the continuation of solutions to the Helmholtz equation}, Inverse Problems, 20 (2004), 697--712.


\bibitem{Ikehata_1991}
Ikehata, M., \emph{A special Green's function for the biharmonic operator and its application to an inverse boundary value problem}, Comput. Math. Appl. \textbf{22} (1991), no. 4--5, 53--66.

\bibitem{Isakov_1991}
Isakov, V., \emph{Completeness of products of solutions and some inverse problems for PDE}, J. Differential Equations \textbf{92} (1991), no. 2, 305--316.

\bibitem{Isakov_local}
Isakov, V., \emph{On uniqueness in the inverse conductivity problem with local data}, 
Inverse Probl. Imaging \textbf{1} (2007), no. 1, 95--105. 

\bibitem{Isakov_2007_inres}
 Isakov, V., \emph{Increased stability in the continuation for the Helmholtz equation with variable coefficient}, Control methods in PDE-dynamical systems, 255--267, Contemp. Math., 426, AMS, Providence, RI, 2007.
 
\bibitem{Isakov_2011} Isakov, V., \emph{Increasing stability for the Schr\"odinger potential from the Dirichlet-to-Neumann map}, DCDS-S, 4 (2011), 631--640. 

\bibitem{Isakov_Lai_Wang_2016}
Isakov, V.,  Lai, R.,  Wang, J., \emph{Increasing stability for the conductivity and attenuation coefficients}, 
SIAM J. Math. Anal. \textbf{48} (2016), no. 1, 569--594. 
 
 \bibitem{INUW_2014}
 Isakov, V., Nagayasu, S.,  Uhlmann, G., Wang, J., \emph{Increasing Stability of the Inverse Boundary Value Problem for the Schr\"odinger Equation},  
Contemp. Math. \textbf{615} (2014), 131--141. 

\bibitem{Isakov_Wang_2014}
 Isakov, V., Wang, J., \emph{Increasing stability for determining the potential in the Schr\"dinger equation with attenuation from the Dirichlet-to-Neumann map},  Inverse Probl. Imaging \textbf{8} (2014), no. 4, 1139--1150. 


\bibitem{KLU_2012}
Krupchyk, K.,  Lassas, M., Uhlmann, G., \emph{Determining a first order perturbation of the biharmonic operator by partial boundary measurements}, J. Funct. Anal., \textbf{262} (2012), 1781--1801. 

\bibitem{KLU_2014}
Krupchyk, K.,  Lassas, M., Uhlmann, G., \emph{Inverse boundary value problems for the perturbed polyharmonic operator}, Trans. Amer. Math. Soc., \textbf{366} (2014), 95--112. 

\bibitem{Krup_Uhlmann_2016_spectral}
Krupchyk, K.,  Uhlmann, G., \emph{Inverse boundary problems for polyharmonic operators with unbounded potentials},  J. Spectr. Theory \textbf{6} (2016), no. 1, 145--183. 



\bibitem{Krupchyk_Uhlmann_2018}
Krupchyk, K.,  Uhlmann, G., \emph{Inverse problems for advection diffusion equations in admissible geometries},  Comm. Partial Differential Equations \textbf{43} (2018), no. 4, 585--615.

\bibitem{Krupchyk_Uhlmann_2019}
Krupchyk, K.,  Uhlmann, G., \emph{Stability estimates for partial data inverse problems for Schr\"odinger operators in the high frequency limit},  J. Math. Pures Appl. (9) \textbf{126} (2019), 273--291.

\bibitem{Liang}
Liang, L., \emph{Increasing stability for the inverse problem of the Schr\"odinger equation with the partial Cauchy data},  
Inverse Probl. Imaging \textbf{9} (2015), no. 2, 469--478. 

\bibitem{Mandache_2001}
Mandache, N., \emph{Exponential instability in an inverse problem for the Schr\"odinger equation}, Inverse Problems \textbf{17} (2001), no. 5, 1435--1444.

\bibitem{McLean_book}
McLean, W., \emph{Strongly elliptic systems and boundary integral equations}, Cambridge University Press, Cambridge, 2000.

 \bibitem{Serov}
 Serov, V., \emph{Borg-Levinson theorem for perturbations of the bi-harmonic operator},  Inverse Problems \textbf{32} (2016), no. 4, 045002, 19 pp.

\bibitem{Sylvester_Uhlmann_1987}
Sylvester, J., Uhlmann, G., \emph{A global uniqueness theorem for an inverse boundary value
problem}, Ann. of Math. (2) \textbf{125} (1987), no. 1, 153--169.



\bibitem{Yang-Yang}
Yang, Y., \emph{Determining the first order perturbation of a bi-harmonic operator on bounded and unbounded domains from partial data},  
J. Differential Equations \textbf{257} (2014), no. 10, 3607--3639. 

\end{thebibliography}
\end{document}